\newcommand{\Span}{\mathrm{Span}}
\newcommand{\bO}{\mathbb{O}}
\newcommand{\R}{\mathbb{R}}
\newcommand{\C}{\mathbb{C}}
\newcommand{\F}{\mathbb{F}}
\newcommand{\Z}{\mathbb{Z}}
\newcommand{\V}{\mathbb{V}}
\newcommand{\Aut}{\mathrm{Aut}}
\newcommand{\VJ}{\mathcal{VJ}}
\newcommand{\vm}{\mathop{\veebar}\nolimits}
\newcommand{\vmseven}{\mathop{\veebar}\nolimits_{\!7}}
\newcommand{\hh}{\mathfrak{h}}
\newcommand{\inner}[2]{\langle #1, #2 \rangle}
\newcommand{\Iim}{\mathrm{Im}}
\theoremstyle{plain} 
\newtheorem{theorem}{Theorem}[section] 
\newtheorem{corollary}[theorem]{Corollary}
\newtheorem{proposition}[theorem]{Proposition}
\theoremstyle{definition} 
 \newtheorem{remark}[theorem]{Remark}
\newtheorem{definition}[theorem]{Definition}
\begin{document}

\title{An Exceptional 7-dimensional Real Algebra: Octonions, $G_2$, and the Fano Plane}
\author{Olcay Coşkun$^1$\and Alp Eden$^2$}
\date{
	$^1$ Mathematics Research Center, ASOIU, Baku, Azerbaijan \\ 
   \texttt{olcay.coshkun@asoiu.edu.az}\\%
	$^2$ Izmir, Turkey \\ \texttt{alp.eden5@gmail.com} \\[2ex]%
}

\maketitle
\begin{abstract}
We study a seven-dimensional non-associative algebra, the
\emph{exceptional Vidinli algebra}, defined by lifting the
bilinear product introduced by H\"{u}seyin Tevfik Pasha (Vidinli)
in 1882 from three to seven dimensions via the octonionic cross
product.
This algebra is unital, simple, and non-associative, with
automorphism group $U(3)$.
Its multiplication splits canonically into a simple Jordan algebra and a Heisenberg Lie algebra,
realizing the Jordan--Lie structure of the
exceptional Vidinli algebra.
Every principal 2-plane through the unit is isomorphic to $\C$,
and every principal 3-plane is isomorphic to a twisted Vidinli
algebra introduced below. 
The main result is a $(\Z/2)^3$ grading of the cross product,
under which the multiplication table of the exceptional Vidinli algebra is determined by three explicit rules with no reference to the calibration form. This grading unifies three structures through a single group: the Fano plane $\mathrm{PG}(2,2)$, whose lines correspond to the Vidinli subalgebras and Vidinli-Jordan subalgebras, 
the discrete family of directional Vidinli algebras indexed by
the nonzero elements of $(\Z/2)^3$, and the Heisenberg partition
of basis pairs, which realizes the Fano incidence relation as a
commutator condition. The resulting Fano-Vidinli duality identifies $(\Z/2)^3$ as the common source of both the Fano geometry and the Vidinli family.

{\flushleft{\bf MSC2020:}} 17D25, 17C50, 17A60
{\flushleft{\bf Keywords:}} Octonions, Fano plane, Jordan algebra, Lie algebra, non-associative algebra, exceptional algebra
\end{abstract}

\section{Introduction}

In his 1882 monograph \emph{Linear Algebra}~\cite{Pacha}, the Ottoman
mathematician Hüseyin Tevfik Pasha, known as Vidinli after his birthplace in present-day Bulgaria, introduced a bilinear product on $\R^3$ designed to recover the geometric applications of quaternions, inner product, cross product, and scalar triple product without invoking quaternion formalism~\cite{CEK}. The resulting algebra $\V_3 = (\R^3, \vm)$, the \emph{Vidinli algebra},
is unital, non-commutative, non-associative, and simple.
Its multiplication decomposes as
\begin{equation}
\label{eq:V3-intro}
a \vm b \;=\;
\langle a,e_1\rangle b + \langle b,e_1\rangle a
- \langle a,b\rangle e_1 + \omega_0(a,b)\,e_1,
\end{equation}
where $e_1$ is a fixed unit vector, $V_0 = e_1^\perp$, and
$\omega_0(u,v) = \langle e_1, u\times v\rangle$ is a symplectic form on $V_0$. The symmetric part of this product defines a simple Jordan algebra $\VJ_3$, and the commutator $[a,b] = 2\,\omega_0(a,b)\,e_1$ is a Heisenberg Lie algebra $\mathfrak{h}_1$ with center $\R e_1$.

The formula~\eqref{eq:V3-intro} depends only on a unit vector and a
symplectic form on its orthogonal complement, and makes sense in any
odd dimension.
The question of which dimensions admit a canonical choice of symplectic
form is answered by the theory of vector cross products:
a bilinear map $\times\colon\R^m\times\R^m\to\R^m$ satisfying
skew-symmetry, orthogonality to both factors, and the identity
$|u\times v|^2 = |u|^2|v|^2 - \langle u,v\rangle^2$ exists only for
$m \in \{0,1,3,7\}$~\cite{BrownGray}.
In dimensions~3 and~7 it arises from the imaginary parts of quaternion and octonion multiplication, respectively.
In dimension~7, for any unit vector $e_1$, the projection
$\omega_0(u,v) = \langle e_1, u\times v\rangle$ yields a canonical
non-degenerate symplectic form on $e_1^\perp \cong \R^6$, and the
formula~\eqref{eq:V3-intro} defines a seven-dimensional algebra
$\V_7 = (\R^7, \vmseven)$, which we call the
\emph{exceptional Vidinli algebra}.

The algebra $\V_7$ is unital, non-commutative, non-associative, and
simple, with $\Aut(\V_7)\cong U(3)$ (Theorem~\ref{thm:V7-structure}).
Its multiplication splits into a simple Jordan algebra $\VJ_7$ and a
seven-dimensional Heisenberg Lie algebra $\mathfrak{h}_3$
(Theorem~\ref{thm:Heisenberg}).
Replacing $e_1$ by an arbitrary unit vector $p\in S^6$ gives a
continuous family $\{\V_{7,p}\}_{p\in S^6}$ of algebras, all mutually
isomorphic via the action of the exceptional Lie group $G_2$
(Proposition~\ref{prop:G2-equiv}).
The isomorphism type is independent of all choices: any two non-degenerate
$3$-forms on $\R^7$ yield isomorphic algebras
(Proposition~\ref{prop:iso-type-unique}).
The mechanism is specific to dimension~7; for example, in dimension~3 the same argument produces a continuous family $\{\mathcal{V}_T\}_{T>0}$ of mutually non-isomorphic algebras and this is the precise sense in which $\V_7$ is exceptional.

The internal geometry of $\V_7$ is reflected in its subalgebras.
Every principal 2-plane $\Span_\R\{e_1, u\}$ is isomorphic to $\C$
(Proposition~\ref{prop:complex-subalg}), foliating $\V_7 \setminus \R e_1$
by copies of the complex numbers around the unit axis.
For 3-dimensional subspaces the picture is richer: every principal
3-plane $\Span_\R\{e_1, u, v\}$ with $u, v \in V_0$ orthonormal is
isomorphic to the twisted Vidinli algebra $\mathcal{V}_t$, where
$t = \omega_0(u,v) \in [-1,1]$
(Theorem~\ref{thm:3d-subalg-classification}).
As $u$ and $v$ vary, the full twisted family
$\{\mathcal{V}_t\}_{0 \leq t \leq 1}$ is realized inside $\V_7$, with
the Vidinli subalgebras $\mathcal{V}_1 = \V_3$ at the boundary and the
Jordan algebra $\mathcal{V}_0 = \VJ_3$ at the degenerate end.

The main result of the paper is the $(\Z/2)^3$ grading of Section~\ref{sec:grading}.
The seven imaginary octonion basis vectors $e_1,\dots,e_7$ are naturally labeled by the nonzero elements of $(\Z/2)^3$ via the Cayley--Dickson construction, and under this labeling the cross product satisfies $e_j \times e_k = \varepsilon(j,k)\,e_{j+k}$
(Theorem~\ref{thm:Z2-grading}). This single formula determines the Fano plane $\mathrm{PG}(2,2)$ as the
collection of subgroups of $(\Z/2)^3$ of order 4, the subalgebra structure of $\V_7$ as three $\V_3$ subalgebras for subgroups containing $e_1$ and four $\VJ_4$ subalgebras for those not containing $e_1$ (Theorem~\ref{thm:subalgebras}), and the full multiplication table of $\V_7$ via three explicit rules and no reference to the calibration form $\Phi$ (Theorem~\ref{thm:fano-rigidity}). The 21 unordered pairs of basis vectors are then partitioned into 7 groups of 3 by the Heisenberg partition
(Theorem~\ref{thm:heisenberg-partition}): the pair $\{j,k\}$ belongs to the support of index~$i$ if and only if the commutator
$[e_j,e_k]_i \neq 0$, which occurs precisely when $j+k=i$ in $(\Z/2)^3$. The Fano incidence relation $e_i\in H$ and the commutator condition $[e_j,e_k]_i\neq 0$ are equivalent, giving the Fano-Vidinli duality (Corollary~\ref{cor:fano-vidinli-duality}): the group $(\Z/2)^3$ is the common source of both the Fano geometry and the Vidinli family, with the Heisenberg partition as the bridge between them.

Section~\ref{sec:V3} reviews $\V_3$ and the twisted family
$\{\mathcal{V}_T\}$.
Section~\ref{sec:V7} defines $\V_7$, establishes its structural
properties, introduces the continuous family, and proves the
reconstruction of the cross product from the family.
Section~\ref{sec:V7} also classifies the 2- and 3-dimensional subalgebras of $\V_7$ via Theorem~\ref{thm:3d-subalg-classification}
Section~\ref{sec:grading} develops the $(\Z/2)^3$ grading and the
Fano--Vidinli duality.

\section{Preliminaries on the Vidinli algebra}
\label{sec:V3}

\noindent
This section develops the theory of the three-dimensional Vidinli algebra, emphasizing the structural role of the symplectic form hidden in \cite{CEK}. We note that the original definition of Vidinli given in \cite{Pacha2} is geometric. In this paper we use the identification from \cite{CEK} and refer the reader to this paper for further details.

Let $\R^3$ carry the standard Euclidean inner product $\inner{\cdot}{\cdot}$ and the standard cross product $\times$, with an ordered orthonormal basis $\{e_1, e_2, e_3\}$ satisfying $e_2 \times e_3 = e_1$.

\begin{definition}[Vidinli Multiplication]
\label{def:V3}
The \emph{Vidinli multiplication} on $\R^3$ is the bilinear map $\vm\colon \R^3 \times \R^3 \to \R^3$ defined in the standard basis by:
\begin{equation}
\label{eq:V3-coord}
a \vm b = (a_1 b_1 - a_2 b_2 - a_3 b_3 + a_2 b_3 - a_3 b_2)e_1 + (a_1 b_2 + a_2 b_1)e_2 + (a_1 b_3 + a_3 b_1)e_3.
\end{equation}
The resulting algebra $\V_3 = (\R^3, \vm)$ is the \emph{Vidinli algebra}.
\end{definition}

It is shown in \cite{CEK} that the Vidinli algebra is a unital algebra with identity $e_1$. It is neither commutative nor associative. The map $\bar{a} = (a_1, -a_2, -a_3)$ is an involution satisfying $\bar{a} \vm a = a \vm \bar{a} = \|a\|^2 e_1$, so every $a \notin e_1^\perp$ is invertible with $a^{-1} = \bar{a}/\|a\|^2$. The set of zero divisors is exactly the plane $e_1^\perp$. For any unit vector $u \in e_1^\perp$, the subspace $\Span\{e_1, u\}$ is a subalgebra isomorphic to $\C$; thus $\V_3$ is foliated by copies of $\C$ around the principal axis $\R e_1$.

The following result identifies the Vidinli multiplication in terms of an inner product and the cross product, allowing us to uncover the geometric content and to decompose the multiplication into symmetric and skew-symmetric parts.

\begin{proposition}[Geometric description of Vidinli multiplication]
\label{prop:V3-intrinsic}
For all $a, b \in \R^3$, the Vidinli multiplication satisfies:
\begin{equation}
\label{eq:V3-intrinsic-eq}
a \vm b = \inner{a}{e_1}b + \inner{b}{e_1}a - \inner{a}{b}e_1 + \inner{e_1}{a \times b}e_1.
\end{equation}
\end{proposition}
\begin{proof}
Write $a = a_1 e_1 + a_2 e_2 + a_3 e_3$ and $b = b_1 e_1 + b_2 e_2 + b_3 e_3$. One computes $\inner{e_1}{a \times b} = a_2 b_3 - a_3 b_2$ directly from the standard formula for the cross product.
Expanding the right-hand side of~\eqref{eq:V3-intrinsic-eq} using
$\inner{a}{e_1} = a_1$, $\inner{b}{e_1} = b_1$, and
$\inner{a}{b} = a_1 b_1 + a_2 b_2 + a_3 b_3$ then recovers
the coordinate expression~\eqref{eq:V3-coord}.
\end{proof}
The four terms in~\eqref{eq:V3-intrinsic-eq} separate naturally into
a symmetric part and a skew-symmetric part.  Define
\begin{equation}
\label{eq:Spart}
S(a,b) = \inner{a}{e_1}\,b + \inner{b}{e_1}\,a - \inner{a}{b}\,e_1 \text{ and }
[a, b]_{\vm} = \inner{e_1}{a \times b}\,e_1,
\end{equation}
so that $a \vm b = S(a,b) + [a,b]_{\vm}$.
As shown in \cite{CEK}, the commutator $[a, b] = a \vm b - b \vm a = 2[a,b]_{\vm}$ defines a Lie bracket on $\R^3$. The resulting Lie algebra is isomorphic to the Heisenberg algebra $\hh_1$, with center $\R e_1$ and symplectic pair $(e_2, e_3)$. With the definition given in the next section, it is straightforward to conclude that the symmetric part $(\R^3, S)$ is a simple Jordan algebra, isomorphic to the Vidinli-Jordan algebra $\VJ_3$.


The key to generalization is to recognize the last term
of~\eqref{eq:V3-intrinsic-eq} as coming from a symplectic form.
Set $V_0 = e_1^\perp$ and define
$\omega_0 \colon V_0 \times V_0 \to \R$ by
\begin{equation}
\label{eq:omega0-def}
\omega_0(u, v) = \inner{e_1}{u \times v}.
\end{equation}
Then $\omega_0$ is bilinear, skew-symmetric, and non-degenerate. Extending $\omega_0$ to $\R^3$ by
$\omega_0(e_1, \cdot) = 0$, Equation~\eqref{eq:V3-intrinsic-eq}
becomes
\begin{equation}
\label{eq:V3-omega-form}
a \vm b \;=\;
\inner{a}{e_1}\,b + \inner{b}{e_1}\,a - \inner{a}{b}\,e_1
+ \omega_0(a,b)\,e_1.
\end{equation}
The first three terms are determined entirely by the inner product and
the unit direction $e_1$; they encode the Jordan structure.
The last term is governed solely by $\omega_0$; it encodes the Lie
structure. Notice that the commutator $[a, b] = 2\,\omega_0(a,b)\,e_1$ and the symplectic form $\omega_0$ determine each other. Also products inside $V_0$ reduce to
$a \vm b = (\omega_0(a,b) - \inner{a}{b})\,e_1$. Hence
the product of two vectors in $e_1^\perp$ always lands on the
principal axis $\R e_1$.

Equation~\eqref{eq:V3-omega-form} makes sense for any symplectic form on $V_0$ in place of $\omega_0$.
Since any such form on $V_0 \cong \R^2$ is completely determined by
the single value $\omega(e_2, e_3)$, every alternating bilinear form
on $V_0$ is of the form $\omega = T\omega_0$ for some $T \in \R$ and it is symplectic exactly when $T\not= 0$.
This gives a one-parameter family of algebras: for each $T \in \R$
define the \emph{twisted Vidinli multiplication}
\begin{equation}
\label{eq:twisted}
a \ast_T b \;=\;
\inner{a}{e_1}\,b + \inner{b}{e_1}\,a - \inner{a}{b}\,e_1
+ T\,\omega_0(a,b)\,e_1,
\end{equation}
and write $\mathcal{V}_T = (\R^3, \ast_T)$.
The Vidinli algebra is $\mathcal{V}_1 = \V_3$.
The case $T = 0$ degenerates to a Jordan algebra
$\VJ_3$ introduced in the next section. By [\cite{CEK}, Theorem~2], the algebras $\mathcal{V}_T$ and $\mathcal{V}_{T'}$ are isomorphic as unital $\R$-algebras if and only if $|T| = |T'|$.
Consequently, for $T \neq 0$ the family $\{\mathcal{V}_T\}_{T > 0}$
consists of mutually non-isomorphic algebras, one for each value of
$|T|$. As observed in~\cite[\S4.3]{CEK}, no
purely algebraic invariant distinguishes $T=1$. We prove a metric criterion to distinguish the Vidinli algebra.

\begin{proposition}[Weak form of norm multiplicativity]
\label{prop:metric-selects}
Among all twisted Vidinli algebras $\mathcal{V}_T$, the class $|T|=1$
is the unique one satisfying
\begin{equation}
\label{eq:metric-compat}
\|u \ast_T v\| = \|u\|\,\|v\|
\quad\text{for all orthonormal } u, v \in V_0.
\end{equation}
\end{proposition}

\begin{proof}
For orthonormal $u, v \in V_0$ we have $\inner{u}{e_1}=\inner{v}{e_1}
=\inner{u}{v}=0$, so $u \ast_T v = T\,\omega_0(u,v)\,e_1$.
Since $\{u,v\}$ is an orthonormal basis of $V_0$,
$|\omega_0(u,v)| = |\det[u\;v]| = 1$, hence $\|u \ast_T v\| = |T|$.
Condition~\eqref{eq:metric-compat} therefore forces $|T|=1$.
\end{proof}

\begin{remark}
The structure given by Equation \eqref{eq:V3-omega-form} allows for a direct generalization to any odd-dimensional space $\R^{2n+1}$.  Due to the special geometric properties that arise in the presence of a vector cross product, this paper focuses exclusively on the case $n=3$ ($\V_7$). In dimension $7$, the octonionic cross product selects a canonical symplectic structure on $V_0 \cong \R^6$. The detailed algebraic classification and topological properties of the general family $(\R^{2n+1}, \vm_\omega)$ for arbitrary $n$ will be the subject of a forthcoming paper.
\end{remark}

\section{The Exceptional Vidinli Algebra \texorpdfstring{$\V_7$}{V7}}
\label{sec:V7}

\noindent
A vector cross product $\times\colon\R^m\times\R^m\to\R^m$ satisfying
skew-symmetry, orthogonality to both factors, and the Pythagorean norm
identity $|u\times v|^2 = |u|^2|v|^2-\inner{u}{v}^2$ exists only for
$m\in\{0, 1, 3, 7\}$~\cite{BrownGray}. It vanishes in dimensions $0$ and $1$, and arises from the imaginary parts
of quaternion and octonion multiplication, in dimensions 3 and 7, respectively.
In dimension~7, the imaginary octonions $\Iim(\bO)\cong\R^7$ carry the cross product $u\times v = \Iim(uv)$, and for any unit vector $e_1$ the projection $\omega_0(u,v) = \inner{e_1}{u\times v}$ yields a canonical non-degenerate symplectic form on $e_1^\perp\cong\R^6$. This is the canonical input for the Vidinli construction in dimension~7.

\subsection{Definition and Coordinate Formula}
\label{sec:V7-def}
Let $\{e_1,\dots,e_7\}$ be an orthonormal basis for $\R^7$ and let
$V_0=e_1^\perp$. Let $\omega^1,\dots,\omega^7$ be the dual basis.
Following \cite[Definition 1]{Bryant}, we write
$\omega^{ijk}=\omega^i\wedge\omega^j\wedge\omega^k$ and define the
standard positive stable $3$-form
\[
\Phi=\omega^{123}+\omega^{145}+\omega^{167}
+\omega^{246}-\omega^{257}-\omega^{347}-\omega^{356}.
\]
Its contraction with $e_1$ is
\[
\iota_{e_1}\Phi=\omega^{23}+\omega^{45}+\omega^{67}=\omega_0.
\]
The cross product associated with $\Phi$ is the bilinear map
$\times:\R^7\times\R^7\to\R^7$ uniquely characterized by
\begin{equation}
\label{eq:cross-product-def}
\langle u\times v,w\rangle=\Phi(u,v,w)
\qquad\text{for all }u,v,w\in\R^7.
\end{equation}

\begin{definition}
\label{def:V7}
The \emph{Vidinli multiplication in dimension 7} is 
\begin{equation}
\label{eq:V7-def}
a\vmseven b \;:=\;
\inner{a}{e_1}b + \inner{b}{e_1}a - \inner{a}{b}e_1
+ \inner{e_1}{a\times b}e_1.
\end{equation}
We call $\V_7 := (\R^7,\vmseven)$ the \emph{exceptional Vidinli algebra}.
\end{definition}

\noindent Setting $a\vmseven b = c_1e_1+\sum_{i=2}^7 c_ie_i$, the coordinate
formula for the Vidinli multiplication is
\begin{equation}
\label{eq:V7-coord}
\begin{aligned}
c_1 &= a_1b_1 - \sum_{i=2}^7 a_ib_i
    + (a_2b_3-a_3b_2) + (a_4b_5-a_5b_4) + (a_6b_7-a_7b_6),\\
c_i &= a_1b_i + b_1a_i \quad \text{for } i = 2,\dots,7.
\end{aligned}
\end{equation}

\begin{remark}
\label{rem:salamon}
The formula~\eqref{eq:V7-def} parallels the normed algebra product
of~\cite[Theorem.~2.9]{Salamon}, where a vector $u\in\R^t$ decomposes as
$(u_0,u_1)$ and the product takes the form
$uv = u_0v_0 - u_1\cdot v_1 + u_0v_1 + v_0u_1 + u_1\times v_1$.
The Vidinli multiplication modifies the last component to produce
a unital product on all of $\R^7$ with unit element $e_1$, at the
cost of introducing zero divisors.
\end{remark}
\noindent We derive the main structural properties of $\V_7$ from Definition~\ref{def:V7}. The following conjugation operation plays a key role in calculations. We include as a separate definition for further reference.
\begin{definition}
\label{def:conj}
The \emph{conjugate} of $a\in\V_7$ is $\bar a \;:=\; 2\inner{a}{e_1}e_1 - a$, reflection of $a$ across the axis $\R e_1$.
\end{definition}
In the decomposition $a = a_1 e_1 + \hat{a}$ with $\hat{a}\in V_0$, we have $\bar a = a_1 e_1 - \hat{a}$. The following equalities follows easily from the definition, we leave the straightforward proof to the reader. For all $a,b\in\V_7$, we have 
$$\bar a \vmseven a = a \vmseven \bar a = \|a\|^2\, e_1\, \text{ and } \bar a \vmseven b + \bar b \vmseven a = 2\inner{a}{b}\,e_1.$$
Next we collect basic structural properties of the Vidinli algebra. 
\begin{theorem}
\label{thm:V7-structure}
The exceptional Vidinli algebra $\V_7$ is (i) non-commutative and non-associative with a two-sided identity $e_1$; (ii) every non-zero element $a\in\V_7$ is two-sided invertible with $a^{-1} = \bar a/\|a\|^2$; (iii) $\V_7$ has no proper two-sided ideals and no non-zero idempotent except $e_1$; (iv) $\Aut(\mathbb{V}_{7}) \;\cong\; U(3).$ 
\end{theorem}
\begin{proof}
Parts (i) and (ii) are easy calculations. For Part (iii), let $I$ be a non-zero two-sided ideal and $0\neq a\in I$. By (ii), $a$ is invertible, hence $e_1 = a^{-1}\vmseven a\in I$. Hence $I = \V_7$. Also the only idempotents being $0$ and $e_1$ follows from direct computation.

We only prove Part (iv). Let \(\Phi\) be an automorphism of \(\V_{7}\). We must have \(\Phi( e_{1})= e_{1}\). We claim that the restriction to $V_0$ has image in $V_0$. Suppose $v\in V_0$ and write $\Phi(v) = \alpha e_1 + w$ for some $w\in V_0$ and \(\alpha\in\R\). Calculate $\Phi(v\vmseven v)$ in two ways.
\[
 -\|v\|^2\, e_1 = \Phi(-\|v\|^2\, e_1) = \Phi(v\vmseven v) = \Phi(v)\vmseven \Phi(v) = (\alpha^2 - \inner{w}{w})\,  e_1 + 2\alpha\, w.
\]
Comparing the two sides, we get that either $\alpha = 0$ or $w = 0$. But $w=0$ is not possible because then \(\Phi(v) = \alpha e_1\) which implies that \(v= \alpha e_1\), contradiction since \(v\in V_0\). Hence we must have $\alpha = 0$, that is, $\Phi(v)\in V_0$, as required. Thus \(\Phi\) induces a linear map 
\(\psi = \Phi|_{V_{0}} : V_{0} \to V_{0}\). Also \( \Phi \) preserves the Vidinli multiplication on \( V_0 \). Indeed, for \( a, b \in V_0 \), since
$ a \vmseven b\in \R e_1$, we have 
\[
\bigl(- \inner{a}{b} + \omega_0(a,b)\bigr)\, e_1 = a \vmseven b = \Phi(a \vmseven b) =
\Phi(a) \vmseven \Phi(b)
= \bigl(- \inner{\psi(a)}{\psi(b)} + \omega_0(\psi(a), \psi(b))\bigr)\, e_1.
\]
In particular, we obtain $\omega_0(a,b) - \inner{a}{b}
= \omega_0(\psi(a), \psi(b))- \inner{\psi(a)}{\psi(b)}$.
Hence \( \psi \) preserves the bilinear form \( \beta(a,b) := \omega_0(a,b) - \inner{a}{b} \). 
To get individual identities, note that for any \( a \in V_0 \),
we have $\Phi(a \vmseven a) = -\|a\|^2\, e_1$ and 
$\Phi(a) \vmseven \Phi(a) = -\|\psi(a)\|^2\, e_1$.
So we must have \( \|\psi(a)\| = \|a\| \) for all \( a \in V_0 \), that is, \( \psi \in O(V_0, \inner{}{}) \). Since \( \beta \) is preserved and \( \inner{}{} \) is preserved, it follows that
$\omega_0(\psi(a), \psi(b)) = \omega_0(a,b)$,
i.e., \( \psi \in \mathrm{Sp}(V_0, \omega_0) \). We conclude that
$ \psi \in O(V_0, \inner{}{}) \cap \mathrm{Sp}(V_0, \omega_0)$.
This intersection is isomorphic to the unitary group \( U(3) \), see~\cite[Proposition~1.6.5]{Koszul}. Hence \(\Aut(\V_7)\subseteq U(3)\).

Conversely, let $A\in U(3) = O(6)\cap Sp(6,\R)$ and define
$\tilde A\colon\V_7\to\V_7$ by $\tilde A(a_1 e_1 + \hat{a})
= a_1 e_1 + A\hat{a}$ where $\hat{a}\in V_0$.
It is easy to verify that $\tilde A$ is an automorphism of $\V_7$. Thus $\Aut(\V_7)\cong U(3)$.
\end{proof}

\begin{remark}
Norm multiplicativity $\|a\vmseven b\| = \|a\|\|b\|$ fails in $\V_7$:
for any nonzero unit $u \in V_0$, we have $u\vmseven u = -e_1$, so
$\|u\vmseven u\| = 1 < \|u\|^2$ whenever $\|u\|>1$.
This is expected, since $\V_7$ admits nonzero zero divisors.
A general norm formula analogous to~\cite[Proposition~5]{CEK}
for $\V_7$ would be of independent interest.
\end{remark}

\begin{remark}
The idea that each unit imaginary octonion direction determines a distinct algebraic structure on imaginary octonions $Im(\bO)$ traces to Günaydin's observation \cite{Gunaydin} that a choice of a direction in the space of octonion units determines the complex structure and a color decomposition. In a sense, the exceptional Vidinli algebra makes this directional dependence algebraically precise. 
\end{remark}
\subsection{Subalgebra geometry}
\label{sec:subalg-geom}
In this section we consider the 2-dimensional and the 3-dimensional subalgebras of $\V_7$ containing the principal direction $e_1$. As in the case of the Vidinli algebra, every such 2-dimensional subalgebra is isomorphic to the algebra of complex numbers. Furthermore, by Definition \ref{def:V7}, it is clear that every 3-dimensional subspace of $\V_7$ containing the unit $e_1$
is a subalgebra of $\V_7$. They turn out to be of Vidinli-type. The isomorphism type is controlled by a single real parameter,
and as this parameter varies, the full twisted family $\{\mathcal{V}_t\}_{0 \leq t \leq 1}$ is realized inside $\V_7$.

We begin with the 2-dimensional case.


\begin{proposition}
\label{prop:complex-subalg}
For every unit vector $u \in \V_7 \setminus \R e_1$, the principal
plane $\Pi_u := \Span_\R\{e_1, u\}$ is a subalgebra of\/ $\V_7$
isomorphic to $\C$.
Setting $\alpha = \inner{u}{e_1}$, the multiplication on $\Pi_u$
takes the form
\begin{equation}
\label{eq:Pi-u-product}
(a_1 e_1 + a_2 u)\vmseven(b_1 e_1 + b_2 u)
= (a_1 b_1 - a_2 b_2)\,e_1
+ (a_1 b_2 + a_2 b_1 + 2\alpha\,a_2 b_2)\,u,
\end{equation}
and the map $\varphi(e_1)=1$, $\varphi(u)=\eta$, where $\eta$
satisfies $\eta^2 = -1 + 2\alpha\eta$, is an algebra isomorphism
$\Pi_u \xrightarrow{\;\sim\;} \C$.
When $u \perp e_1$ this reduces to the standard identification
$e_1 \mapsto 1$, $u \mapsto i$.
\end{proposition}

\begin{proof}
For $a = a_1 e_1 + a_2 u$ and $b = b_1 e_1 + b_2 u$,
the $V_0$-components of $a$ and $b$ are parallel, so
$\omega_0(a,b) = 0$.
The product formula~\eqref{eq:Pi-u-product} then follows from
Definition~\ref{def:V7} by direct expansion, confirming that
$\Pi_u$ is closed under $\vmseven$.
The map $\varphi$ is an algebra isomorphism if and only if
$\varphi(a)\varphi(b) = \varphi(a \vmseven b)$, which requires
$\eta^2 = -1 + 2\alpha\eta$.
The discriminant $4(\alpha^2 - 1) < 0$ since
$|\alpha| = |\inner{u}{e_1}| < 1$ for $u \notin \R e_1$,
so $\eta \in \C \setminus \R$ and $\varphi$ is an isomorphism.
\end{proof}


For 3-dimensional subspaces the picture is richer.
Every such subspace through $e_1$ is a Vidinli-type algebra, but
the isomorphism type depends on the geometry of the 2-plane it
cuts in $V_0$.
The almost complex structure on $V_0$ plays the key role.
Since $\omega_0$ is the standard symplectic form on $V_0 \cong \R^6$,
we define, with respect to the standard basis, the complex structure
$J\colon V_0 \to V_0$  by $J(e_{2k}) = e_{2k+1}$ and
$J(e_{2k+1}) = -e_{2k}$ for $k = 1,2,3$. This satisfies $J^2 = -\mathrm{Id}$ and
$\omega_0(u,v) = \inner{Ju}{v}$ for all $u,v \in V_0$.

\begin{theorem}
\label{thm:3d-subalg-classification}
Let $W = \Span_\R\{e_1, u, v\}$ with $u, v \in V_0$
orthonormal, and set $t := \omega_0(u,v) \in [-1,1]$.
Then $(W, \vmseven)$ is isomorphic to the twisted Vidinli algebra
$\mathcal{V}_{|t|}$ of Section~\ref{sec:V3}.
In particular we have

\textup{(i)} $W \cong \V_3$ if and only if $|t| = 1$,
      equivalently $v = \pm Ju$.

\smallskip      

\textup{(ii)} $W \cong \mathcal{V}_{|t|}$, non-isomorphic to $\V_3$,
      for $0 < |t| < 1$.

\smallskip

\textup{(iii)} $W \cong \VJ_3$ if and only if $t = 0$,
      equivalently $u$ and $v$ are $\omega_0$-orthogonal.

As $u$ and $v$ range over all orthonormal pairs in $V_0$, the
restrictions $(W,\vmseven)$ realize the complete family
$\{\mathcal{V}_t\}_{0 \leq t \leq 1}$ inside $\V_7$.
\end{theorem}

\begin{proof}
Every $W$ of this form is closed under $\vmseven$: for
$w_1, w_2 \in W$, formula~\eqref{eq:V7-def} gives
$w_1 \vmseven w_2 \in \Span_\R\{e_1, w_1, w_2\} \subseteq W$.
Since $u, v \in V_0$ are orthonormal and $e_1$ is the unit, the
multiplication table of $(W, \vmseven)$ is
\[
u \vmseven v = t\,e_1, \quad
v \vmseven u = -t\,e_1, \quad
u \vmseven u = v \vmseven v = -e_1,
\]
together with the unit rule $e_1 \vmseven w = w$ for all $w \in W$.
This coincides with the twisted Vidinli table $\ast_t$
of~\eqref{eq:twisted} at $T = t$, so the map
$e_1 \mapsto e_1$, $u \mapsto e_2$, $v \mapsto e_3$
is an algebra isomorphism $(W, \vmseven) \cong \mathcal{V}_t$.

The classification $\mathcal{V}_t \cong \mathcal{V}_{t'} \iff
|t| = |t'|$ from~\cite[Theorem~2]{CEK} gives parts~(i)--(iii),
using $\mathcal{V}_1 = \V_3$ and $\mathcal{V}_0 = \VJ_3$.
For~(i), $|t| = |\inner{Ju}{v}| = 1$ iff $v = \pm Ju$
by Cauchy--Schwarz.
The full range $t \in [-1,1]$ is achieved because $t = \inner{Ju}{v}$
varies continuously as $v$ rotates in the unit sphere of
$u^\perp \cap V_0$, from $Ju$ (giving $t = 1$) to any direction
$\omega_0$-orthogonal to $u$ (giving $t = 0$).
\end{proof}

\begin{corollary}
\label{cor:V3-unique}
For each unit $u \in V_0$, the unique $3$-plane through $e_1$
and $u$ isomorphic to $\V_3$ is $\Span_\R\{e_1, u, Ju\}$.
The three canonical $\V_3$-subalgebras correspond to
$u \in \{e_2, e_4, e_6\}$\textup{:}
\begin{align*}
W_{e_2} &= \Span_\R\{e_1, e_2, e_3\},\\
W_{e_4} &= \Span_\R\{e_1, e_4, e_5\},\\
W_{e_6} &= \Span_\R\{e_1, e_6, e_7\}.
\end{align*}
\end{corollary}

\subsection{Jordan-Lie Splitting}
\label{sec:Jordan-Lie}
The Vidinli multiplication splits into symmetric and skew-symmetric components:
\[
a \vm_7 b = a \circ_{\mathfrak v} b + [a,b]_{e_1},
\]
where $ a \circ_{\mathfrak v} b := \inner{a}{e_1}\, b + \inner{b}{e_1}\, a - \inner{a}{b}\, e_1$ and $[a,b]_{e_1} := \omega(a, b)\, e_1$ are the anti-commutator and the commutator of the Vidinli multiplication, respectively. Here we put $\omega(a, b) = \inner{e_1}{a\times b}$. This decomposition exhibits the non-associative Jordan-Lie algebra structure of the Vidinli algebra. As mentioned in   
 \cite{McCrimmon2}, this kind of structures are studied by physicist to find systems containing both bosons and fermions. See also \cite{Okubo} and \cite{Grishkov} for Lie-Jordan algebras. We consider the induced algebra structures separately.

Let $V = \R e_1 \oplus V_0$ be as above. We call the space $V$ equipped with the product 
$\circ_{\mathfrak v}$ the \emph{Vidinli-Jordan algebra}, denoted $\VJ_{7}$. Notice that the anti-commutator can be defined on any dimension. We extend the definition of Vidinli-Jordan algebras to arbitrary dimensions as follows.

Let \( V = \mathbb{R}^m \) be a real inner product space for $n\in \mathbb N$, with the standard orthonormal basis \( \{e_1, e_2, \dots, e_m\} \). Fix the unit vector \( e_1 \in V \) and set $ V_0 := e_1^\perp$. Decompose
$V = \mathbb{R}e_1 \oplus V_0$ and define a symmetric bilinear product \( \circ_{\mathfrak v} \colon V \times V \to V \) by
$ a \circ_{\mathfrak v} b := \inner{a}{e_1}\, b + \inner{b}{e_1}\, a - \inner{a}{b}\, e_1$.
The resulting algebra \( \VJ_m := (V, \circ_{\mathfrak v}) \) is called the \emph{Vidinli–Jordan algebra} of dimension \( m \) with distinguished axis \( e_1 \). The following proposition collects basic properties. 

\begin{proposition}
For any \( m \geq 2 \), the product \( \circ_{\mathfrak v} \) 
is bilinear, commutative, non-associative, unital, with unit \( e_1 \), power-associative and satisfies the Jordan identity. In particular, \( \VJ_m \) is a Jordan algebra for all \( m \geq 1 \). Moreover \( \VJ_m \) is simple and not formally real.
\end{proposition}

\begin{proof}
\textup{(1)} Commutativity and bilinearity are immediate. Clearly \( e_1 \) is the identity element. Associativity does not hold since \(e_2\circ_{\mathfrak v}(e_2\circ_{\mathfrak v} e_3)\neq (e_2\circ_{\mathfrak v} e_2)\circ_{\mathfrak v} e_3\).

\smallskip

\textup{(2)} Write \( a = a_1 e_1 + u \), with \( u \in V_0 \). Then direct verification using $a \circ_{\mathfrak v} a = 2a_1 a - \|a\|^2 e_1$ confirms power-associativity.

\smallskip

\textup{(3)} The Jordan identity follows from the computation in \cite[Ch.~III, §6]{Koecher}, using the expression
$a\circ_{\mathfrak v} b = \lambda(a)\,b + \lambda(b)\,a - \mu(a,b)\,e_1,$
with \( \lambda(a) = \inner{a}{e_1} \), \( \mu(a,b) = \inner{a}{b} \).

\smallskip

\textup{(4)} Since \( e_1 \circ_{\mathfrak v} e_1 + e_2 \circ_{\mathfrak v} e_2 = 0 \), while \( e_2 \circ_{\mathfrak v} e_2 = -e_1 \ne 0 \), the algebra fails formal reality.

\smallskip

\textup{(5)} Let \( I \subseteq \VJ_m \) be a nonzero ideal, and choose \( x \in I \setminus \{0\} \). Then
$ x \circ_{\mathfrak v} x - 2\inner{x}{e_1}\,x = -\|x\|^2 e_1 \in I,$
so \( e_1 \in I \), and hence \( I = V \).
\end{proof}

The Vidinli-Jordan algebra $\VJ_m$ is an example of a Jordan algebra of a symmetric bilinear form. Also since \( \VJ_m \) is not formally real, it is not a spin factor. We refer to Zel'manov \cite{Zelmanov} for further details about classification of Jordan algebras.

\begin{corollary}
    The real vector space $\R^{7}$ equipped with the anti-commutator of $\V_7$ is isomorphic to the Vidinli-Jordan algebra $\VJ_{7}$. 
\end{corollary}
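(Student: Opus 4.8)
The plan is to exhibit the identity map on $\R^{2n+1}$ as the required isomorphism, reducing the statement to a term-by-term comparison of the two products. The entire content lies in the Jordan--Lie splitting of $\vm$ already recorded above, so no new machinery is needed.

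First I would make the anti-commutator of $\Vn$ explicit. By definition it is the symmetric part $a \vijo b = \tfrac12\bigl(a \vm b + b \vm a\bigr)$. Expanding $a \vm b$ and $b \vm a$ via Definition \ref{defn:Vidinli}, the terms $(a\cdot e_1)\,b$ and $(b\cdot e_1)\,a$ are merely interchanged by the swap $a\leftrightarrow b$ and so survive; the term $-(a\cdot b)\,e_1$ survives because $a\cdot b = b\cdot a$; and the symplectic contribution becomes $\tfrac12\bigl(\omega(a,b)+\omega(b,a)\bigr)\,e_1$, which vanishes by skew-symmetry of $\omega$. This leaves
\[
a \vijo b = (a\cdot e_1)\,b + (b\cdot e_1)\,a - (a\cdot b)\,e_1,
\]
which is exactly the summand $\vijo$ appearing in the decomposition $\vm = \vijo + [\,\cdot\,,\cdot\,]_{e_1}$.

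Next I would observe that this is verbatim the defining formula of the Vidinli--Jordan product on $\VJ_m$ specialized to $m = 2n+1$, with the same distinguished axis $e_1$ and the same Euclidean inner product on $\R^{2n+1}$. Since the two algebras have identical underlying vector spaces and identical bilinear products, the identity map is a bijective algebra homomorphism, hence an isomorphism, and I would conclude.

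I do not expect a genuine obstacle: the argument is purely a matter of inspection once the $\omega$-term is seen to cancel under symmetrization, which is immediate from skew-symmetry together with symmetry of the inner product. The only point requiring care is the normalization convention---reading the anti-commutator as the symmetric part $\tfrac12(a\vm b + b\vm a)$, equivalently the summand $\vijo$ in the Jordan--Lie decomposition---so that no spurious factor of two appears and the match with the $\VJ_{2n+1}$ formula is exact.
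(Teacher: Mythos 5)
Your proposal is correct and matches the paper's (implicit) argument: the corollary is stated without proof precisely because, as you observe, symmetrizing $\vm$ kills the $\omega$-term and leaves verbatim the defining formula of $\vijo$ on $\VJ_{2n+1}$, so the identity map is the isomorphism. Your care about the normalization (reading the anti-commutator as the symmetric part $\tfrac12(a\vm b+b\vm a)$, consistent with the paper's splitting $\vm=\vijo+[\cdot,\cdot]_{e_1}$) is exactly the right point to flag.
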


Next we consider the skew-symmetric part which turns out to be Lie-admissible.

\begin{theorem}\label{thm:Heisenberg}
Let $\V_{7}$ be the exceptional Vidinli algebra with unit $e_1$ and hyperplane $V_0 = e_1^\perp$. Then:

  \textup{(i)} For $a,b \in V_0$, we have  
$[a,b] = 2\,\omega(a,b)\,e_1$,
  and $[e_1,a] = 0$ for all $a \in \V_{7}$.

  \textup{(ii)} The bracket $[\cdot,\cdot]$ satisfies the Jacobi identity and defines a $2$-step nilpotent Lie algebra on $\V_{7}$ with center $\R e_1$.
  
\textup{(iii)} The Lie algebra \((\V_7, [\cdot, \cdot])\) is isomorphic to the real Heisenberg Lie algebra $\mathfrak{h}_3$ of dimension $7$, with center $\R e_1$ and the subspace $V_0$ satisfying $[u,v] = 2\,\omega(u,v)\,e_1$.
\end{theorem}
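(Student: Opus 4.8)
The whole argument rests on the commutator formula already recorded in Theorem~\ref{thm:basics}, namely $a \vm b - b \vm a = 2\,\omega(a,b)\,e_1$ for all $a,b \in \Vn$. The plan is to treat this single identity as the engine and read off all three claims from it. For part~(1) I would simply specialize: when $a,b \in V_0$ the form $\omega$ restricts to $\omega_0$, giving $[a,b] = 2\,\omega(a,b)\,e_1$ directly, and since $\omega(e_1,\cdot)=0$ by the definition of $\omega$, the same formula yields $[e_1,a] = 2\,\omega(e_1,a)\,e_1 = 0$ for every $a$. No computation beyond substitution is needed here.

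For part~(2) the decisive observation is that $[a,b] = 2\,\omega(a,b)\,e_1 \in \R e_1$ for all $a,b$, so the derived subspace $[\Vn,\Vn]$ lies inside $\R e_1$. Combined with $[e_1,\cdot]=0$ from part~(1), this forces every iterated bracket of length at least two to vanish: $[[a,b],c] = 2\,\omega(a,b)\,[e_1,c] = 0$. The Jacobi identity then holds trivially, since each of its three summands is already zero, and the same vanishing shows $[\Vn,[\Vn,\Vn]] = 0$, i.e.\ two-step nilpotency. To pin down the center I would write $z = z_1 e_1 + z_0$ with $z_0 \in V_0$ and note that $[z,a] = 2\,\omega(z_0,a_0)\,e_1$ for $a = a_1 e_1 + a_0$ (using $\omega(e_1,\cdot)=0$). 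This vanishes for all $a$ precisely when $\omega(z_0,\cdot)$ is identically zero on $V_0$; since $\omega_0$ is non-degenerate this forces $z_0=0$, so the center is contained in $\R e_1$, and the reverse inclusion is part~(1).

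For part~(3) I would exhibit the isomorphism concretely using the symplectic basis. The vectors $e_2,\dots,e_{2n+1}$ of $V_0$ satisfy $[e_{2k},e_{2k+1}] = 2\,\omega(e_{2k},e_{2k+1})\,e_1 = 2e_1$ with all other pairwise brackets vanishing, which is exactly the Heisenberg relation after normalization. Fixing the standard presentation of $\mathfrak h_n$ with central generator $z$ and conjugate pairs $x_k,y_k$ satisfying $[x_k,y_k]=z$, I would define a linear map by $e_1 \mapsto \tfrac12 z$, $e_{2k}\mapsto x_k$, $e_{2k+1}\mapsto y_k$, and check on basis pairs that it intertwines the two brackets; being a linear bijection between spaces of equal dimension, it is a Lie algebra isomorphism. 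Equivalently, and more invariantly, one observes that $(\Vn,[\cdot,\cdot])$ is by construction the Heisenberg algebra attached to the symplectic vector space $(V_0,2\,\omega_0)$, so that invoking the fact that symplectic spaces of equal dimension are isomorphic yields dependence only on $n$.

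I do not expect a genuine obstacle: once the commutator lands in the one-dimensional center, nilpotency and Jacobi are automatic, and the only points demanding care are bookkeeping ones---tracking the factor of $2$ so that the normalization in the isomorphism matches the chosen presentation of $\mathfrak h_n$, and correctly invoking the non-degeneracy of $\omega_0$ in the center computation. The mild subtlety worth flagging is that $V_0$ is \emph{not} an abelian subalgebra in the literal sense, since its self-brackets are central and generically nonzero; the statement's phrase is best read as asserting that $V_0$ maps isomorphically onto the abelian quotient $\mathfrak h_n/\R e_1$, which I would make explicit to avoid ambiguity.
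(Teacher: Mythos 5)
Your proposal is correct and follows essentially the same route as the paper: read everything off the identity $a \vm b - b \vm a = 2\,\omega(a,b)\,e_1$, note that the bracket lands in the central line $\R e_1$ so Jacobi and two-step nilpotency are automatic, and match a Darboux basis to the standard presentation of $\mathfrak h_n$ after rescaling the central generator. Your extra verification that the center is \emph{exactly} $\R e_1$ (via non-degeneracy of $\omega_0$) is a worthwhile detail the paper leaves implicit, as is your reading of the phrase ``abelian subspace $V_0$.''
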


\begin{proof}

Part (i) is straightforward. For (ii), since the image of $[\cdot,\cdot]$ lies in the one-dimensional subspace $\R e_1$, which is central, the bracket is automatically $2$-step nilpotent and satisfies the Jacobi identity. Choosing a Darboux basis $\{p_i,q_i\}_{i=1}^3$ for $(V_0,\omega)$ gives the relations
\[
[p_i,q_j] = 2\,\delta_{ij}\,e_1, \quad [p_i,p_j]=[q_i,q_j]=0, \quad [V,e_1]=0,
\]
which are exactly the defining relations of $\mathfrak{h}_3$ after re-scaling $z=2e_1$, proving (iii).
\end{proof}
\medskip

\subsection{The Continuous Family \texorpdfstring{$\{\V_{7,p}\}_{p\in S^6}$}{V7p}}
\label{sec:V7-G2}

The definition of $\V_7$ chooses $e_1$ as the principal direction,
but the defining formula is valid for any unit vector. This yields  a continuous $S^6$-family of algebras.

\begin{definition}
\label{def:V7p}
For each unit vector $p\in S^6\subset\R^7$, the
\emph{directional Vidinli algebra}
$\V_{7,p} = (\R^7,\vm_p)$ is defined by
\begin{equation}
\label{eq:V7p}
a\vm_p b \;:=\;
\inner{a}{p}b + \inner{b}{p}a - \inner{a}{b}p
+ \inner{p}{a\times b}p.
\end{equation}
The algebra $\V_7$ of Definition~\ref{def:V7} is the instance
$\V_{7,e_1}$.
For the seven standard basis vectors we write
$\V_{7,i} := \V_{7,e_i}$.
\end{definition}

\begin{remark}
\label{rem:coord-form}
For each basis vector $p = e_i$, the general formula~\eqref{eq:V7p}
and commutator reduce to
\begin{equation}
\label{eq:V7i-coord}
a\vm_{e_i} b \;=\; a_i\,b + b_i\,a - \inner{a}{b}\,e_i
           + \inner{e_i}{a\times b}\,e_i,
\qquad
[a,b]_i \;=\; 2\,\inner{e_i}{a\times b}\,e_i,
\end{equation}
where $a_i = \inner{a}{e_i}$.
The value $\inner{e_i}{e_j\times e_k}$ for any basis pair $(e_j,e_k)$
is an explicit integer read off from the cross product table
at the beginning of this section.
In particular, the condition $[e_j,e_k]_i\neq 0$ is a computation
in the structure constants of\/ $\V_{7,i}$ alone,
with no further reference to $\Phi$.
\end{remark} 
Members of this infinite family are isomorphic via the natural action of the exceptional Lie group $G_2$. This justifies that the definition of $\V_7$ is independent of the choice of the unit direction. 
\begin{proposition}\label{prop:G2-equiv}
For any $p, q\in S^6$ there exists $g\in G_2$
with $g(p)=q$, and every such $g$ is an algebra isomorphism
$\V_{7,p}\xrightarrow{\;\sim\;}\V_{7,q}$.
In particular, all algebras $\V_{7,p}$ are mutually isomorphic.
\end{proposition}

\begin{proof}
$G_2\subset SO(7)$ acts on $S^6$ and by~\cite[Theorem.~8.2]{Salamon}, this action is transitive, so there exists $g\in G_2$ with $g(p)=q$. 
Since $g\in G_2=\Aut(\bO)$, it preserves the octonion product
and hence the cross product, so $g(a\times b) = g(a)\times g(b)$.
Since $g\in SO(7)$, it also preserves the inner product.
Hence $g$ preserves the Vidinli multiplication, as required.
\end{proof}

The formula~\eqref{eq:V7p} visibly depends on $p$, but has a
second implicit input: the cross product $\times$, which is
determined by the $3$-form $\Phi$ of
Definition~\ref{def:V7}.
The complete input data for one algebra is the triple $(\R^7, \Phi, p)$. Throughout this section $\Phi$ is fixed and only $p$ varies,
so we suppress $\Phi$ from the notation. But the isomorphism type is also independent of the choice of the 3-form.

\begin{proposition}
\label{prop:iso-type-unique}
Let $\Phi$ and $\Phi'$ be positive stable $3$-forms on $\R^7$,
and let $p,p'\in S^6$ be unit vectors
\textup{(}with respect to the inner products determined by
$\Phi$ and $\Phi'$ respectively\textup{)}.
Then $\V_{7,p,\Phi}\cong\V_{7,p',\Phi'}$.
\end{proposition}

\begin{proof}
By Theorem~3.2(ii) of~\cite{Salamon}, every non-degenerate
$3$-form on a $7$-dimensional vector space uniquely determines
a compatible inner product; we denote these by $g_\Phi$
and $g_{\Phi'}$.
By Theorem~3.2(iii) of~\cite{Salamon}, there exists
$g\in GL(7,\R)$ with $g^*\Phi' = \Phi$.

Since $g^*\Phi'=\Phi$ and the inner product is a functor of
the $3$-form by Theorem~3.2(ii), we have
$g^*g_{\Phi'} = g_\Phi$,
so $g\colon(\R^7,g_{\Phi'})\to(\R^7,g_\Phi)$ is an isometry.
Since $g$ preserves both $g_\Phi$ and $\Phi$, it intertwines
the cross products: $g(u\times_{\Phi'} v)=g(u)\times_\Phi g(v)$.
The same calculation as in the proof of
Proposition~\ref{prop:G2-equiv} then gives
\[
g(a\,\vm_{p,\Phi'}\,b) \;=\; g(a)\,\vm_{g(p),\Phi}\,g(b),
\]
so $g$ is an algebra isomorphism
$\V_{7,p,\Phi'}\xrightarrow{\;\sim\;}\V_{7,g(p),\Phi}$.
Composing with Proposition~\ref{prop:G2-equiv}, which gives
$\V_{7,g(p),\Phi}\cong\V_{7,p',\Phi}$ for any $p'$,
completes the proof.
\end{proof}

\begin{remark}
\label{rem:dim3-contrast}
By Remark~3.3 of~\cite{Salamon}, \cite[Theorem~3.2(ii)]{Salamon} is specific to dimension $7$. In dimension $3$, a
non-degenerate $3$-form \textup{(}i.e.\ a volume form\textup{)}
does not uniquely determine a compatible inner product.
Concretely, any positive scalar multiple of the metric is also
compatible with the same volume form, so $\omega_0$ can be
rescaled to $T\omega_0$ within a fixed metric, producing the
twisted family $\mathcal{V}_T$ of Section~\ref{sec:V3} which are mutually
non-isomorphic for distinct $|T|$.

In dimension $7$, by contrast, $\Phi$ determines the metric
uniquely, so no such rescaling is possible and
Proposition~\ref{prop:iso-type-unique} gives a unique
isomorphism class.
This is the precise sense in which $\V_7$ is the
\emph{exceptional} Vidinli algebra.
\end{remark}

The commutator formula shows that each
$\V_{7,p}$ measures the $p$-component of $a\times b$.
Summing over a frame or integrating over the sphere $S^6$ recover the full cross product.

\begin{proposition}
\label{prop:cross-decomp}
\hfill

\textup{(i)} For any orthonormal basis $\{p_1,\dots,p_7\}$ of $\R^7$,
\begin{equation}
\label{eq:cross-decomp}
a\times b \;=\; \frac{1}{2}\sum_{k=1}^7 [a,b]_{p_k}.
\end{equation}

\smallskip

\textup{(ii)} For all $a,b\in\R^7$,
\begin{equation}
\label{eq:cross-integral}
a\times b \;=\; \frac{7}{2}\int_{S^6} [a,b]_p\;d\sigma(p).
\end{equation}
where $d\sigma$ denotes the normalized $SO(7)$-invariant probability measure on $S^6$.

\end{proposition}

\begin{proof}
(i) By the commutator formula,
$\frac{1}{2}\sum_{k=1}^7 [a,b]_{p_k}
= \sum_{k=1}^7 \inner{p_k}{a\times b}\,p_k
= a\times b$, the last step being the expansion of $a\times b$ in the orthonormal basis $\{p_k\}$.

\noindent (ii) This follows from the equality
\begin{equation}
\label{eq:sphere-avg}
\int_{S^6} \inner{p}{w}\,p\;d\sigma(p) \;=\; \frac{1}{7}\,w.
\end{equation}
which holds, by standard calculations from measure theory, for any $w\in\R^7$.
\end{proof}

\begin{remark}
\label{rem:lieadm}
For each $p\in S^6$, the bracket $[a,b]_p = 2\inner{p}{a\times b}p$
is proportional to $p$ and hence central in
$(\V_{7,p},[\cdot,\cdot]_p)$.
The Jacobi identity is thus trivially satisfied:
$[[a,b]_p,c]_p = 0$ for all $a,b,c$.
Each $(\V_{7,p},[\cdot,\cdot]_p)$ is therefore a Lie algebra
(the three-dimensional Heisenberg algebra $\hh_3$ of
Theorem~\ref{thm:Heisenberg}(iii)).
The cross product $a\times b$, by contrast, does not satisfy the
Jacobi identity on $\R^7$.
Proposition~\ref{prop:cross-decomp} is therefore a
\emph{Lie-admissible splitting}: the cross product is recovered
as a sum over an $S^6$-frame of Heisenberg-valued projections, one
for each directional algebra in the family.
\end{remark}

\section{The $(\mathbb{Z}/2)^3$ Grading}\label{sec:grading}
 
\noindent
In this section we introduce a $(\Z/2)^3$ grading of the octonion basis, coming from the cross-product. It turns out that this grading simultaneously determines the Fano plane and the subalgebra structure of $\V_7$. The group $(\Z/2)^3$ links the Fano plane and the Vidinli family through two structures: its \emph{order-4 subgroups} index the subalgebras, and its \emph{nonzero elements} index the family. The Heisenberg partition, introduced below, is the algebraic realization of the Fano incidence relation between them. We summarize these relations as follows. 
 
\begin{center}
\begin{tikzpicture}[
  font=\small,
  node/.style={inner sep=3pt},
  lbl/.style={font=\scriptsize},
  arr/.style={-{Latex}, thick},
  bij/.style={{Latex}-{Latex}, thick}
]
\node[node] (G)  at (0,    0)    {$(\Z/2)^3$};
\node[node] (H)  at (-5.5, -2)   {$\{H\}$};
\node[node] (W)  at (-3.0, -2)   {$\{W_H\}$};
\node[node] (Ei) at ( 3.0, -2)   {$\{\V_{7,i}\}$};
\node[node] (V)  at ( 5.5, -2)   {$\{e_i\}$};
 
\draw[arr] (G) to[out=215, in=90]
  node[lbl, right, pos=0.70] {subgroups} (H);
\draw[arr] (G) to[out=325, in=90]
  node[lbl, left, pos=0.75] {elements} (V);
\draw[arr, dashed] (G) to[out=245, in=90]
  node[lbl, right, pos=0.65] {subgroups} (W);
\draw[arr, dashed] (G) to[out=295, in=90]
  node[lbl, left, pos=0.55] {elements} (Ei);
 
\draw[bij] (H) --
  node[lbl, above=1pt] {Thm.~\ref{thm:subalgebras}} (W);
\draw[bij] (W) --
  node[lbl, above=1pt] {Heisenberg partition}
  node[lbl, below=2pt] {$e_i\!\in\!H \;\Leftrightarrow\; [e_j,e_k]_i\!\neq\!0$}
  (Ei);
\draw[bij] (Ei) --
  node[lbl, above=1pt] {def.} (V);
 
\node[lbl, below=4pt] at (H)  {Fano lines};
\node[lbl, below=4pt] at (W)  {subalgebras};
\node[lbl, below=4pt] at (Ei) {family};
\node[lbl, below=4pt] at (V)  {Fano points};
\end{tikzpicture}
\end{center}
 
We begin with the grading. Label the seven imaginary basis vectors by the nonzero elements of $(\Z/2)^3$ via the Cayley-Dickson construction as follows, see \cite[Example 2.5]{Salamon}.
\begin{equation}
\label{eq:Z2-labeling}
\begin{array}{llll}
e_1 \leftrightarrow (1,0,0), &
e_2 \leftrightarrow (0,1,0), &
e_3 \leftrightarrow (1,1,0), &
e_4 \leftrightarrow (0,0,1), \\[4pt]
e_5 \leftrightarrow (1,0,1), &
e_6 \leftrightarrow (0,1,1), &
e_7 \leftrightarrow (1,1,1).
\end{array}
\end{equation}
We write $+$ for addition in $(\Z/2)^3$, identifying indices
with their images, and regard $(\Z/2)^3$ as the vector
space $(\F_2)^3$ over the two-element field when convenient. The following justifies our choice and shows the dependence on the cross product. Its proof is just a direct verification, left to the reader.
 
\begin{theorem}
\label{thm:Z2-grading}
For any two distinct nonzero $j, k\in(\Z/2)^3$,
\begin{equation}
\label{eq:Z2-cross}
e_j \times e_k \;=\; \varepsilon(j,k)\,e_{j+k},
\end{equation}
where $\varepsilon(j,k)\in\{+1,-1\}$ is the sign coming from the cross product Section~\ref{sec:V7-def}.
\end{theorem}

The grading formula~\eqref{eq:Z2-cross} is the restriction to imaginary basis vectors of the full octonion product, which endows $\bO$ with the structure of a \emph{twisted group algebra} of $(\Z/2)^3$ over $\R$ \cite[Section 2.1]{Baez}. More precisely, one defines a bilinear product on the free $\R$-module $\R[(\Z/2)^3]$ by
\[
e_j \cdot e_k \;=\; \alpha(j,k)\,e_{j+k},
\]
where $\alpha\colon(\Z/2)^3\times(\Z/2)^3\to\{+1,-1\}$ is the
\emph{twisting function} recording the sign of the octonion product.

The twisted group algebra is associative if and only if $\alpha$ is a
$2$-cocycle on $(\Z/2)^3$. For the quaternions ($(\Z/2)^2$) the corresponding twisting function is a cocycle, making $\mathbb{H}$ associative. For the octonions ($(\Z/2)^3$), $\alpha$ is \emph{not} a $2$-cocycle; this cohomological failure is the structural reason for the non-associativity of $\bO$ and, as a consequence, of $\V_7$.
The $(\Z/2)^3$-grading of the octonion multiplication also induces
natural gradings on $\mathfrak{g}_2 = \mathrm{Der}(\bO)$
and on all exceptional Lie algebras via the Tits
construction~\cite{CuencaDraperMeyer}; their graded
contractions are studied in~\cite{CuencaDraperMeyer}.

The first step in constructing the above diagram is to identify the Fano plane through the grading. We identify Fano points with the non-zero group elements. Then the Fano lines are determined as follows.

\begin{definition}
\label{def:fano-lines}
A \emph{Fano line} is a triple $\{e_i,e_j,e_k\}$ of distinct
nonzero elements of $(\Z/2)^3$ with $i+j+k = 0$.
\end{definition}

Clearly these lines are in correspondence by subgroups of $(\Z/2)^3$. Indeed, the zero-sum condition $i+j+k=0$ is equivalent to closure under addition, so each Fano line spans a unique subgroup.
The count equals the number of $2$-dimensional subspaces of
$(\F_2)^3$, which is $\frac{(2^3-1)(2^3-2)}{(2^2-1)(2^2-2)} = 7$.
We proved the following corollary. 
 
\begin{corollary}
\label{cor:fano-subgroups}
The map $\{e_i,e_j,e_k\}\mapsto\langle e_i,e_j\rangle
= \{0,e_i,e_j,e_{i+j}\}$
is a bijection from the $7$ Fano lines to the $7$ subgroups
of order $4$ in $(\Z/2)^3$, each isomorphic to $(\Z/2)^2$.
\end{corollary}

Now the following identification of projective plane axioms follows easily from group theoretic properties of $(\Z/2)^3$. We leave the proof to the reader.
 
\begin{corollary}
\label{cor:projective-axioms}
\hfill

\textup{(i)} Any two distinct nonzero elements $a,b$ lie in a unique
      subgroup of order $4$, namely
      $\langle a,b\rangle = \{0,a,b,a+b\}$; and hence on a unique Fano line $\{a,b,a+b\}$.

\smallskip

\textup{(ii)} Any two distinct subgroups $H,H'$ of order $4$ meet at a             subgroup of order 2, hence two Fano lines intersect at a unique point.
\end{corollary}
 
Corollaries~\ref{cor:fano-subgroups}--\ref{cor:projective-axioms}
identify the Fano plane $\mathrm{PG}(2,2)$ with the projective
plane of $(\F_2)^3$ where points are $1$-dimensional subspaces,
lines are $2$-dimensional subspaces, and incidence is containment.
By Theorem~\ref{thm:Z2-grading}, the Fano line through $e_j$
and $e_k$ is $\{e_j,e_k,e_{j+k}\}$, so the cross product
$e_j\times e_k = \varepsilon(j,k)\,e_{j+k}$ is detected by
the unique Fano line they determine.
Next we consider the subalgebras determined by the grading. Note that each subgroup $H\leq(\Z/2)^3$ of order 4 spans a subspace $W_H := \Span_\R(H\setminus\{0\})\subset\R^7$. Whether $e_1\in H$ or not determines the algebraic type.
 
\begin{theorem}
\label{thm:subalgebras}
Let $H\leq(\Z/2)^3$ have order $4$, with Fano line
$H\setminus\{0\} = \{e_i,e_j,e_k\}$.

\smallskip

\textup{(i)} If $e_1\in H$, $W_H$ is a subalgebra isomorphic to $\V_3$.
      There are exactly $3$ such subgroups, namely
      $\langle e_2, e_3\rangle$,
      $\langle e_4,e_5 \rangle$,
      $\langle e_6,e_7\rangle$.

\smallskip
 
\textup{(ii)} If $e_1\notin H$,
      $\R e_1\oplus W_H$ is a Jordan subalgebra isomorphic to
      $\VJ_4$, with $a\vmseven b = -\inner{a}{b}e_1$ for
      $a,b\in W_H$.
      There are exactly $4$ such subgroups.
\end{theorem}
 
\begin{proof}
(i)
The unique pair in $H$ with sum $e_1$ gives $e_j\vmseven e_k = \varepsilon(j,k)e_1$. From the table $\varepsilon(j,k)=+1$ for each such pair $j<k$, so the structure constants match $\V_3$. 
The count $(2^3-2)/2 = 3$ equals the number of pairs
$\{a,e_1+a\}$ with $a\notin\{0,e_1\}$.
 
(ii)
Since $e_1\notin H$, none of the pairwise sums $e_i+e_j$,
$e_i+e_k$, $e_j+e_k$ equals $e_1$, so $e_a\vmseven e_b = 0$ for distinct $a,b\in H\setminus\{0\}$. Expanding $a\vmseven b$ by bilinearity, the off-diagonal terms vanish and the diagonal gives
$\sum_l\alpha_l\beta_l(-e_1) = -\inner{a}{b}e_1$.
\end{proof}
 
The $3$ subgroups containing $e_1$ are the $2$-dimensional
subspaces of $(\F_2)^3$ through $(1,0,0)$, counted by
$|(\F_2)^2\setminus\{0\}|/(|(\F_2)^1\setminus\{0\}|) = 3$.
The $4$ subgroups not containing $e_1$ lie in
$\ker\pi_1\cong(\F_2)^2$, where $\pi_1$ is the first
coordinate projection. The next theorem shows that the exceptional Vidinli algebra is determined once this subalgebras are fixed. 
 
\begin{theorem}[Rigidity of $\V_7$]
\label{thm:fano-rigidity}
The multiplication of\/ $\V_7$ is uniquely determined by
the subgroup lattice of $(\Z/2)^3$: given the subalgebra
rules of Theorem~\ref{thm:subalgebras}, there is a unique
bilinear product on $\R^7$ extending them.
\end{theorem}
 \begin{proof}
Diagonal products are fixed by the unit and square rules.
For pairs $\{e_1, e_k\}$ with $k\geq 2$, the unit property
gives $e_1\vmseven e_k = e_k$.
For the remaining $15$ pairs $\{e_j,e_k\}$ with $j,k\geq 2$,
Corollary~\ref{cor:projective-axioms}(i) gives a unique subgroup
$\langle e_j,e_k\rangle$; Theorem~\ref{thm:subalgebras}
then sets $e_j\vmseven e_k = \varepsilon(j,k)e_1$ if
$e_1\in\langle e_j,e_k\rangle$ 
and $0$ otherwise.
This determines all $7+6+15=28$ structure constants uniquely.
\end{proof}
 
In particular, the multiplication table of $\V_7$ is captured by three rules:
\[
e_1\vmseven e_k = e_k \;(k\geq 2),
\qquad
e_j\vmseven e_j = -e_1 \;(j\geq 2),
\]
\[
e_j\vmseven e_k \;=\;
\begin{cases}
\varepsilon(j,k)\,e_1 & e_1\in\langle e_j,e_k\rangle,\\
0 & e_1\notin\langle e_j,e_k\rangle,
\end{cases}
\quad j,k\geq 2,\;j\neq k.
\]
No reference to $\Phi$ or the cross product is needed once
\eqref{eq:Z2-cross} is fixed.
 
We have constructed subalgebras of $\V_7$ and the Fano plane through the group $(\Z/2)^3$ independently. We complete the picture by showing that they also determine each other via the family $\{\V_{7,i}\}$. This family enters as follows. Combining the grading~\eqref{eq:Z2-cross} with the commutator
formula $[a,b]_i = 2\langle e_i,a\times b\rangle e_i$ gives
\begin{equation}
\label{eq:commutator-basis}
[e_j,e_k]_i \;=\; 2\,\varepsilon(j,k)\,\delta_{i,\,j+k}\,e_i,
\end{equation}
so $[e_j,e_k]_i\neq 0$ if and only if $i=j+k$,
equivalently $e_i\in\langle e_j,e_k\rangle$. This motivates the following duality principle. The proof follows immediately from equation~\eqref{eq:commutator-basis} and the observation that $i=j+k$ is the unique solution.
 
\begin{proposition}
\label{prop:completing-element}
For distinct nonzero $j,k$, exactly one directional commutator
detects the pair $(e_j,e_k)$\textup{:}
\[
[e_j,e_k]_i \;=\;
\begin{cases}
2(e_j\times e_k) & i = j+k,\\
0 & i\neq j+k.
\end{cases}
\]
The detecting algebra is $\V_{7,i}$ where $e_i$ is the third
nonzero element of $\langle e_j,e_k\rangle$.
\end{proposition}
 
\begin{theorem}[Heisenberg partition]
\label{thm:heisenberg-partition}
For each $1\le i\le 7$, define
$\mathrm{supp}(i) := \bigl\{\{j,k\}:[e_j,e_k]_i\neq 0, 1\le j< k\le 7\bigr\}$.
Then

\smallskip

\textup{(i)} $|\mathrm{supp}(i)| = 3$ for every $i$.

\smallskip

\textup{(ii)} The supports partition all $21$ pairs $\{ \{j,k\}\mid 1\le j< k\le 7\}$.

\smallskip

\textup{(iii)} $\mathrm{supp}(i)
      = \bigl\{\{j,k\}:j+k=i\bigr\}
      = \bigl\{\{j,k\}:e_i\in\langle e_j,e_k\rangle\bigr\}$.
\end{theorem}
 
\begin{proof}
By Proposition~\ref{prop:completing-element},
$\{j,k\}\in\mathrm{supp}(i)$ if and only if $j+k=i$.
Part~(i): there are $(2^3-2)/2=3$ pairs $\{j,i+j\}$ with
$j\notin\{0,i\}$.
Part~(ii): each pair has a unique sum, so falls in exactly one
support.
Part~(iii): By Corollary~\ref{cor:fano-subgroups}.
\end{proof}
 
Note that the condition $[e_j,e_k]_i\neq 0$ is a computation in the
structure constants of $\V_{7,i}$ alone, with no reference
to $\Phi$, the cross product, or the Fano plane.
The identification~\eqref{eq:Z2-labeling} translates it into
the group-sum condition $j+k=i$. We summarize the results of this section in the following corollary.
  
\begin{corollary}[Fano - Vidinli duality via $(\Z/2)^3$]
\label{cor:fano-vidinli-duality}
The Fano incidence structure and the family $\{\V_{7,i}\}$
are equivalent through $(\Z/2)^3$.

\smallskip

\textup{(i) (Subgroups $\to$ subalgebras.)}
      Each order-4 subgroup $H$ gives a subalgebra $W_H$
      and Fano lines are
      the nonzero elements of these subgroups.

\smallskip
 
\textup{(ii) (Elements $\to$ family.)}
      Each nonzero $e_i$ gives $\V_{7,i}$; the family is
      the $G_2$-orbit of $\V_7$.
 
\smallskip

\textup{(iii) (Incidence = partition.)}
      Fano incidence equals the Heisenberg
      condition: $e_i\in H \leftrightarrow [e_j,e_k]_i\neq 0$
      for $\{j,k\}\subset H\setminus\{0\}$.
      So $\V_{7,i}$ detects exactly the subalgebras on
      lines through $e_i$.
 \end{corollary}
\begin{proof}
Part~(i) is Theorem~\ref{thm:subalgebras}: each order-4 subgroup $H$
gives a subalgebra $W_H \cong \V_3$ (if $e_1\in H$) or a Jordan
subalgebra $\R e_1 \oplus W_H \cong \VJ_4$ (if $e_1\notin H$).
The Fano lines are the sets $H\setminus\{0\}$ by
Corollary~\ref{cor:fano-subgroups}.

Part~(ii) is Definition~\ref{def:V7p} since each nonzero $e_i$ gives the
directional algebra $\V_{7,i}$, and Proposition~\ref{prop:G2-equiv}
shows that all $\V_{7,i}$ are isomorphic via $G_2$.

For part~(iii), let $\{j,k\}\subset H\setminus\{0\}$.
By Corollary~\ref{cor:fano-subgroups}, $H = \langle e_j,e_k\rangle$,
so $e_i\in H$ if and only if $i = j+k$ in $(\Z/2)^3$.
By Proposition~\ref{prop:completing-element}, $[e_j,e_k]_i\neq 0$
if and only if $i = j+k$.
Hence $e_i\in H \iff [e_j,e_k]_i\neq 0$.
Since this holds for every pair $\{j,k\}\subset H\setminus\{0\}$,
the algebra $\V_{7,i}$ detects exactly those subalgebras $W_H$
for which $e_i\in H$, i.e., those lying on Fano lines through $e_i$.
\end{proof}

\end{document}